\numberwithin{figure}{section}
\numberwithin{equation}{section}
\newtheorem{theorem}{Theorem}[section]
\newtheorem{proposition}[theorem]{Proposition}
\newtheorem{lemma}[theorem]{Lemma}
\newtheorem{corollary}[theorem]{Corollary}
\theoremstyle{definition}
\newtheorem{example}[theorem]{Example}
\newtheorem{remark}[theorem]{Remark}
\newtheorem{question}[theorem]{Question}
\newcommand{\ZZ}{\mathbb{Z}}
\newcommand{\NN}{\mathbb{N}}
\newcommand{\RR}{\mathbb{R}}
\newcommand{\set}[2]{\left\{ #1 \;|\; #2 \right\}}
\newcommand{\bigset}[2]{\left\{ #1 \;\big|\; #2 \right\}}
\newcommand{\Lin}{\mathsf{Lin}}
\newcommand{\Sym}{{\mathfrak{S}}}
\newcommand{\Perm}{\operatorname{Sym}}
\newcommand{\des}{\operatorname{des}}
\newcommand{\Des}{\operatorname{Des}}
\newcommand{\asc}{\operatorname{asc}}
\newcommand{\Asc}{\operatorname{Asc}}
\newcommand{\id}{\operatorname{id}}
\newcommand{\Num}{\mathcal{N}}
\newcommand{\Poin}{\operatorname{Poin}}
\newcommand{\Euler}{\operatorname{E}}
\newcommand{\ino}{\operatorname{ino}}   
\newcommand{\Ino}{\operatorname{Ino}}
\newcommand{\stat}{\operatorname{st}}
\newcommand{\hb}[1]{\overline{#1}}  
\newcommand{\rk}{\operatorname{rk}}
\newcommand{\regions}{\mathcal{R}}
\newcommand{\Dfn}[1]{\emph{\bfseries #1}}
\definecolor{darkblue}{rgb}{0,0,0.7}
\definecolor{lightblue}{rgb}{0.68,0.85,1}
\definecolor{lightgrey}{rgb}{0.9,0.9,0.9}
\definecolor{grey}{rgb}{0.5,0.5,0.5}
\definecolor{darkgreen}{RGB}{0,128,0}
\newcommand{\red}{\color{red}}
\newcommand{\orange}{\color{orange}}
\newcommand{\violet}{\color{violet}}
\def\cA{\mathcal{A}}
\def\cB{\mathcal{B}}
\def\cC{\mathcal{C}}
\def\cL{\mathcal{L}}
\def\cM{\mathcal{M}}
\def\cR{\mathcal{R}}
\title[The coarse flag Hilbert-Poincaré series of the braid arrangement]{The coarse flag Hilbert-Poincaré series\\ of the braid arrangement}
\author[E.~Hoster]{Elena Hoster}
\address[E.~Hoster \& C.~Stump]{Fakultät für Mathematik, Ruhr-Universität Bochum, Germany}
\email{\{elena.hoster,christian.stump\}@rub.de}
\author[C.~Stump]{Christian Stump}
\begin{document}

\begin{abstract}
  The paper concerns the coarse flag Hilbert-Poincaré series of \citeauthor{MaglioneVoll} in the case of the braid arrangement~$\cB_n$ associated to the symmetric group~$\Sym_{n+1}$.
  We explicitly construct a companion statistic $\ino : \Sym_{n+1} \times \Perm(n) \rightarrow \NN$ for the descent statistic on $\Perm(n)$ using reverse $(P,\omega)$-partitions and quasisymmetric functions.
\end{abstract}

\thanks{The authors thank Galen Dorpalen-Barry, Theo Douvropoulos, and Raman Sanyal for helpful discussions.}

\maketitle

\section{Introduction and main results}
\label{sec: intro}

\subsection{Background}

Let~$\cA$ be a central and essential hyperplane arrangement in~$V \cong \RR^n$.
Its \Dfn{lattice of flats}~$\cL$ is given by the intersections of hyperplanes in~$\cA$ ordered by reverse inclusion, so that its unique bottom element is~$V$ and its unique top element is~$\mathbf 0$.
The \Dfn{chain Poincaré series} of a chain~$\cC = \{ C_1 < C_2 < \dots < C_k\}$ in~$\cL$ is given by the product of the Poincaré series of the intervals, with bottom and top attached,
\[
  \Poin_\cC(y) = \prod_{i=0}^k \Poin([C_i,C_{i+1}];y)\,,
\]
where we set $C_0 = V$ and $C_{k+1} = \mathbf{0}$.
The \Dfn{coarse flag Hilbert-Poincaré series} is defined by Maglione--Voll in~\cite{MaglioneVoll} as
\[
    \operatorname{cfHP}_\cA (y,t) =
    \sum_{\cC}
    \Poin_\cC(y) \cdot \left( \frac{t}{1-t} \right)^{\#\cC}
    = \frac{\Num_\cA(y,t)}{(1-t)^n} \,,
\]
where the sum ranges over all chains in the intersection poset~$\cL$ not containing the bottom element~$V$.
Its numerator polynomial has nonnegative coefficients~\cite{poincareextended} and the Chow polynomial of the associated matroid is given by the evaluation $\operatorname{cfHP}_\cA (-x,x)$~\cite{stump2024chowaugmentedchowpolynomials}.
It is moreover shown in~\cite{KühneMaglione} for simplicial arrangements that
\[
  \Num_\cA(1,t) = \#\regions \cdot \Euler_n(t) \,,
\]
where $\Euler_n(t) = \sum_{\sigma \in \Perm(n)} t^{\des(\sigma)}$ is the $n$-th \Dfn{Eulerian polynomial}, and where~$\regions$ denotes the set of regions of the arrangement~$\cA$ given by the connected components of its complement.
This raises the following natural question.

\begin{question}
\label{qu:companionstatistic}
  Find a \emph{companion statistic} $\stat : \regions \times \Perm(n) \rightarrow \NN$ for the descent number on $\Perm(n)$ such that
  \[
    \Num_\cA(y,t) = \sum_{\substack{R \in \regions \\ \sigma \in \Perm(n)}} y^{\stat(R,\sigma)}\cdot t^{\des(\sigma)}\,.
  \]
\end{question}

If~$\cA$ is the reflection arrangement of a real reflection group~$W$, the regions are in bijection with the elements of the group~$W$, so this becomes the question of finding a companion statistic on~${W \times \Perm(n)}$.
We emphasize that it already appears to be a challenging task to \emph{find such a companion statistic}, and that this statistic would in particular provide a new approach towards generalizing left-to-right minima on permutations to real reflection groups, see \Cref{rem:generaltype}.

\medskip

The purpose of this paper is to introduce a new statistic on permutations which answers \Cref{qu:companionstatistic} for the symmetric group.

\medskip

Let $\cB_n$ be the \Dfn{braid arrangement} given by the hyperplanes $H_{ij} = \{ x_i = x_j\}$ for $1 \leq i < j \leq n+1 $ in $V = \big\{ (x_1,\dots,x_{n+1}) \in \RR^{n+1} \mid x_1+\dots+x_{n+1} = 0\big\} \cong \RR^n$, the corresponding reflection group is~$W = \Sym_{n+1}$.
Its lattice of flats, denoted by~$\Pi_n$, is given by the \Dfn{lattice of set partitions} of~$\{1,\dots,n+1\}$ of rank~$n$, with cover relations being given by merging two blocks.
Its minimal element is the all-singletons set partition and its maximal element is the all-in-one-block set partition.
A \Dfn{set composition} is a set partition together with a linear ordering of the blocks.
We write~$B_1 \vert B_2 \vert \cdots \vert B_k$ for the set composition with the blocks being ordered from left to right.
The face poset of the braid arrangement~$\cB_n$, denoted by~$\Sigma_n$, is given by the \Dfn{poset of set compositions} of~$\{1,\dots, n+1\}$ of rank~$n$, with cover relations being given by merging two \emph{consecutive} blocks.
Its minimal elements are thus in natural bijection with permutations, and its unique maximal element is the all-in-one-block set composition.

\begin{example}
\label{ex:coarseflagHP}
  The braid arrangement $\cB_2$ is given below with its regions labeled by $w\in \Sym_{3}$ in one-line-notation.
  \begin{center}
    \begin{tikzpicture}[scale=0.75, baseline=-0.5,font=\small ]
    \def\rows{6}
    \def\hrows{\rows/2}
    \def\cut{0.4}
    \def\reglabel{2.5cm} 

    \tikzstyle{barysd} = [color = gray,dashed] 
    \tikzstyle{hyperplanes} = [thick]

    \begin{scope}
        \clip (0,0) circle (\hrows+\cut);

        \draw[hyperplanes]
        (-\rows,0) -- (\rows,0)
        [rotate=60] (-\rows,0) -- (\rows,0)
        [rotate=60] (-\rows,0) -- (\rows,0);
    \end{scope}
    \begin{scope}[y=(60:1)]
        \coordinate[label= right:$H_{12}$]
        (H12) at (0, \hrows + \cut);
        \coordinate[label= right:$H_{23}$]
        (H23) at (\hrows + \cut,0);
        \coordinate[label= left:$H_{13}$]
        (H13) at (-\hrows - \cut,\hrows +\cut);

        \coordinate[label=center:$123$] (id) at  (30:\reglabel);
        \coordinate[label=center:$213$] (id) at  (90:\reglabel);
        \coordinate[label=center:$231$] (id) at  (150:\reglabel);
        \coordinate[label=center:$321$] (id) at  (210:\reglabel);
        \coordinate[label=center:$312$] (id) at  (270:\reglabel);
        \coordinate[label=center:$132$] (id) at  (330:\reglabel);
    \end{scope}
    \end{tikzpicture}
  \end{center}
  The element $w\in \Sym_{3}$ encodes the region $R_w = \set{x \in V}{ x_{w(1)} \leq x_{w(2)} \leq x_{w(3)}}$.
  Its lattice of flats~$\cL$ is the lattice of set partitions of the set $\{1,2,3\}$.
  Hereby, integers~$i,j$ are in the same block of the corresponding partition if and only if $x_i = x_j$.
  \begin{center}
    \begin{tikzpicture}
        \tikzstyle{vx} = [rectangle, text width=6.5em, align=center]
        \tikzstyle{shortvx} = [rectangle, text width=4em, align=center]
        \tikzstyle{edge} = [shorten <= 0.25pt, shorten >=0.25pt]

        \node[style=shortvx]           (top)   at (0,0) {$\boldsymbol{0}$};
        \node[style=shortvx,right=0em] (13)    at ($(top.west) - (0,4em)$) {$H_{13}$};
        \node[style=shortvx,left=0em]  (12)    at (13.west) {$H_{12}$};
        \node[style=shortvx,right=0em] (23)    at (13.east) {$H_{23}$};

        \node[style=shortvx,right=0em] (cong)   at (23.east) {$\cong$};

        \node[style=vx,right=0em] (b12)   at (cong.east) {$\bigl\{\{1,2\},\{3\}\bigr\}$};
        \node[style=vx,right=0em] (b13)   at (b12.east) {$\bigl\{\{1,3\},\{2\}\bigr\}$};
        \node[style=vx,right=0em] (b23)   at (b13.east) {$\bigl\{\{2,3\},\{1\}\bigr\}$};
        \node[style=vx,right=0em] (btop)  at ($(b13.west) + (0,4em)$) {$\bigl\{1,2,3\bigr\}$};

        \node[style=shortvx,right=0em]	(bot) at ($(13.west) - (0,4em)$) {$V$};
        \node[style=vx,right=0em]	(bbot) at ($(b13.west) - (0,4em)$) {$\bigl\{\{1\},\{2\},\{3\}\bigr\}$};

        \draw[style=edge] (top) -- (12);
        \draw[style=edge] (top) -- (23);
        \draw[style=edge] (top) -- (13);
        \draw[style=edge] (btop) -- (b12);
        \draw[style=edge] (btop) -- (b13);
        \draw[style=edge] (btop) -- (b23);

        \draw[style=edge] (bot) -- (12);
        \draw[style=edge] (bot) -- (23);
        \draw[style=edge] (bot) -- (13);
        \draw[style=edge] (bbot) -- (b12);
        \draw[style=edge] (bbot) -- (b13);
        \draw[style=edge] (bbot) -- (b23);

    \end{tikzpicture}
  \end{center}
  The chains in $\cL \setminus \{V\}$ are given by $\emptyset$, $\{\boldsymbol{0}\}$,  $\{H_{ij}\}$, and $\{H_{ij} < \boldsymbol{0}\}$ for $1\leq i<j \leq 3$.
  We obtain
  \begin{center}
      \renewcommand*{\arraystretch}{1.2}
  \begin{minipage}[t]{0.4\textwidth}
    \begin{tabular}{|l|l|c|}
      \hline
      $\cC$ & $\Poin_\cC (y)$ & $\#\cC$ \\
      \hline
      $\emptyset$ & $1+3y+2y^2$ & 0 \\
      $\{\boldsymbol{0}\}$ & $1+3y+2y^2$ & 1 \\
      $\{H_{ij}\}$ & $(1+y)^2$ & 1 \\
      $\{H_{ij} < \boldsymbol{0}\}$ & $(1+y)^2$ & 2 \\
      \hline
    \end{tabular}
  \end{minipage}
  \begin{minipage}[ct]{0.5\textwidth}
    \begin{align*}
      \operatorname{cfHP}_{\cB_2}(y,t)
      =& \frac{(1+3y+2y^2) + (2+3y+y^2)t}{(1-t)^2}\,.
    \end{align*}
  \end{minipage}
  \end{center}
    \renewcommand*{\arraystretch}{1}
  \noindent\\[0.1em] \noindent The numerator polynomial $\Num_{\cB_2}(y,t) = (1+3y+2y^2) + (2+3y+y^2)t$ evaluated in $y=1$ is
   \[
        \Num_{\cB_2}(1,t) = 6 + 6t = 6 \cdot (1+t) = 6 \cdot \Euler_2(t) \,.
   \]
\end{example}

\subsection{Main result}

Let $(w,\sigma) \in \Sym_{n+1} \times \Perm(n)$ and write~$w = [\ w_1\ |\ w_2\ |\ \dots\ |\ w_{n+1}\ ]$ in one-line notation with bars labeled from left to right by the numbers~$1$ through~$n$ between the entries.
For the given pair $(w,\sigma)$, define a maximal chain in~$\Sigma_n$ by starting with the minimal element given by the all-singleton composition~$w$, and by merging blocks by deleting the bars in the positions in the order given by~$\sigma(1),\sigma(2),\dots,\sigma(n)$.
For~$i \in \{1,\dots,n\}$, one thus has the cover relation in~$\Sigma_n$ given by merging blocks $B | B'$ into $B \cup B'$ at the bar in position~$\sigma(i)$.
The statistic
\[
  \ino : \Sym_{n+1} \times \Perm(n) \rightarrow \NN
\]
is then defined to be the number of such indices~$i$ for which the minimal elements in $B$ and in~$B'$ come \textbf{in o}rder, meaning that $\min(B) < \min(B')$.

\begin{example}
\label{ex:set comp}
  The poset of set compositions of the set $\{1,2,3\}$ is given below.
  Its minimal elements are~$\Sym_3$ and its maximal chains are in bijection with $\Sym_3 \times \Perm(2)$.
  The chain corresponding to $w=123$ and $\sigma=21$ is marked by \textbf{\violet nonviolent violet thick edges} and the chain corresponding to $w=312$ and $\sigma = 12$ is marked by \textbf{\orange golden ratio orange dashed edges}.
  \begin{center}
  \begin{tikzpicture}
      \tikzstyle{vertex} = [rectangle, text width=4em, align=center]
      \tikzstyle{edge} = [shorten <= 0.25pt, shorten >=0.25pt]
      \tikzstyle{whiteedge} = [white, line width=0.5em]
      \tikzstyle{oedge} = [orange, dashed, shorten <= 0.25pt, shorten >=0.25pt, line width=0.2em]
      \tikzstyle{vedge} = [violet, shorten <= 0.25pt, shorten >=0.25pt, line width=0.2em]

      \node[style=vertex]             (top)   at (0,0) {$123$};
      \node[style=vertex,right=0em]   (21)    at ($(top.south) - (0,4em)$) {$3|12$};
      \node[style=vertex,left=0em]   (13)    at (21.west) {$13|2$};
      \node[style=vertex,left=0em]   (23)    at (13.west) {$1|23$};
      \node[style=vertex,left=0em]   (12)    at (23.west) {$12|3$};
      \node[style=vertex,right=0em]   (32)    at (21.east) {$23|1$};
      \node[style=vertex,right=0em]   (31)    at (32.east) {$2|13$};

      \node[style=vertex,right=0em]	(123) at ($(12.west) - (0,5em)$) {$1|2|3$};
      \node[style=vertex,right=0em]	(132) at (123.east)	{$1|3|2$};
      \node[style=vertex,right=0em]	(312) at (132.east)	{$3|1|2$};
      \node[style=vertex,right=0em]	(321) at (312.east)	{$3|2|1$};
      \node[style=vertex,right=0em]	(231) at (321.east)	{$2|3|1$};
      \node[style=vertex,right=0em]	(213) at (231.east)	{$2|1|3$};

      \draw[style=edge] (top) -- (12);
      \draw[style=vedge] (top) -- (23);
      \draw[style=oedge] (top) -- (13);
      \draw[style=edge] (top) -- (21);
      \draw[style=edge] (top) -- (32);
      \draw[style=edge] (top) -- (31);

      \draw[style=edge] ($(12.south) + (0.75em,0)$) -- ($(213.north)-(0.75em,0)$);

      \draw[style=whiteedge] (12) -- (123);
      \draw[style=whiteedge] (23) -- (123);
      \draw[style=whiteedge] (23) -- (132);
      \draw[style=whiteedge] (13) -- (132);
      \draw[style=whiteedge] (13) -- (312);
      \draw[style=whiteedge] (21) -- (312);
      \draw[style=whiteedge] (21) -- (321);
      \draw[style=whiteedge] (32) -- (321);
      \draw[style=whiteedge] (32) -- (231);
      \draw[style=whiteedge] (31) -- (231);
      \draw[style=whiteedge] (31) -- (213);

      \draw[style=edge] (12) -- (123);
      \draw[style=vedge] (23) -- (123);
      \draw[style=edge] (23) -- (132);
      \draw[style=edge] (13) -- (132);
      \draw[style=oedge] (13) -- (312);
      \draw[style=edge] (21) -- (312);
      \draw[style=edge] (21) -- (321);
      \draw[style=edge] (32) -- (321);
      \draw[style=edge] (32) -- (231);
      \draw[style=edge] (31) -- (231);
      \draw[style=edge] (31) -- (213);

  \end{tikzpicture}
  \end{center}
    The merging blocks $3|1$ in the cover relation $3|1|2 \prec 13|2$ are not in order, while the merging blocks in~$13|2 \prec 123$ are.
    Thus,~$\ino({\orange 312},{\orange 12}) = 1$.
    All the blocks in the chain $({\violet 123},{\violet 21})$ come in order, so $\ino({\violet 123},{\violet 21})=2$.
\end{example}

\begin{theorem}
\label{thm:mainresult}
  For the braid arrangement $\cB_n$, we have
  \[
    \Num_{\cB_n}(y,t) = \sum_{\substack{w \in \Sym_{n+1} \\ \sigma \in \Perm(n)}} y^{\ino(w,\sigma)}\cdot t^{\des(\sigma)}\,.
  \]
\end{theorem}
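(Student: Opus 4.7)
The plan is to match both sides via the structure of the partition lattice $\Pi_n$ (the lattice of flats of $\cB_n$) and its face poset $\Sigma_n$, using reverse $(P,\omega)$-partitions to track the descent statistic.

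First, I would multiply the identity through by $(1-t)^n$ to rewrite the left-hand side as
\[
\Num_{\cB_n}(y,t) = \sum_\cC \Poin_\cC(y)\, t^{\#\cC}(1-t)^{n-\#\cC},
\]
where $\cC$ ranges over chains in $\Pi_n \setminus \{V\}$. Every interval in $\Pi_n$ is a product of smaller partition lattices, so $\Poin_\cC(y)$ factors as a product of linear terms of the form $(1+iy)$. A maximal chain $\pi$ in $\Pi_n \setminus \{V\}$ contributes $\Poin_\pi(y)=(1+y)^n$, and a direct count shows that $\pi$ has exactly $2^n$ lifts to maximal chains of $\Sigma_n$, one per independent left/right orientation choice at its $n$ merges; summing $y^{\ino(w,\sigma)}$ over these lifts yields $(1+y)^n$, agreeing with $\Poin_\pi(y)$. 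This matches the marginal $y$-content but still leaves the joint $(y,t)$-refinement to be reconciled.

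Next, for each $w \in \Sym_{n+1}$, I would associate a labeled poset $(P_w, \omega_w)$ on a ground set of $n$ elements indexed by the bars $\{1,\dots,n\}$, whose linear extensions are naturally identified with $\Perm(n)$ and whose $\omega_w$-descent statistic on a linear extension $\sigma$ coincides with $\des(\sigma)$. By Stanley's fundamental theorem for reverse $(P,\omega)$-partitions, the associated generating function decomposes as a sum $\sum_\sigma F_{\Des(\sigma)}$ of fundamental quasisymmetric functions indexed by linear extensions; an appropriate $t$-specialization of the alphabet yields on one side $\sum_\sigma y^{\ino(w,\sigma)}\,t^{\des(\sigma)}$, and on the other the subchain sum $\sum_\cC \Poin_\cC(y)\,t^{\#\cC}(1-t)^{n-\#\cC}$ weighted by $w$-dependent data. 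Summing over $w \in \Sym_{n+1}$ then collapses this $w$-dependent data and recovers the right-hand side of the first display above.

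The main obstacle is the explicit construction of $(P_w,\omega_w)$ and the verification that both expansions of its reverse $(P,\omega)$-partition generating function recover, respectively, the $\ino(w,\sigma)$ weighting on the linear-extension side and the Poincaré-weighted chain sum on the quasisymmetric-specialization side. The combinatorial subtlety is that $\ino$ is defined operationally through the merging process in $\Sigma_n$, whereas $\des(\sigma)$ is a permutation invariant: pinning down the precise compatibility between block orientations along a maximal chain in $\Sigma_n$ and descent sets of linear extensions of $P_w$ is what the theorem ultimately hinges on, and this compatibility will dictate both the poset relations in $P_w$ and the labeling $\omega_w$.
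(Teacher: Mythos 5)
Your proposal correctly identifies the general toolkit the paper uses (reverse $(P,\omega)$-partitions, fundamental quasisymmetric functions, and the relation between chains in $\Pi_n$ and maximal chains of the face poset $\Sigma_n$), and your marginal check that a maximal chain of $\Pi_n$ has $2^n$ lifts contributing $(1+y)^n$ is correct. However, there are two genuine gaps. The first is structural: you propose a \emph{single} labeled poset $(P_w,\omega_w)$ per $w$ whose linear extensions are all of $\Perm(n)$. The only poset on $n$ elements with that property is the antichain, and then the $(P,\omega)$-partition machinery gives no leverage; moreover, the quasisymmetric-function argument can only transfer \emph{descent-set} distributions between two labelings of the same poset, so it cannot simultaneously track the weight $y^{\ino(w,\sigma)}$ unless $\ino(w,\cdot)$ is constant on the set of linear extensions under consideration. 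This is exactly why the paper builds a \emph{family} of posets $P_{w,Y}$ indexed by the admissible in-order sets $Y$, with $\Lin(P_{w,Y}) = \{\sigma : \Ino(w,\sigma)=Y\}$, so that $\Perm(n)$ decomposes as a disjoint union and $\boldsymbol{y}^{\Ino}$ is constant on each block; the quasisymmetric identity is then applied blockwise. Your concluding paragraph names this compatibility as ``the main obstacle,'' but it is not an obstacle to be deferred --- it is the content of the theorem, and the stratification by $Y$ is the missing idea needed to make the $(P,\omega)$ approach go through at all.

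The second gap is the bridge from the chain sum $\sum_\cC \Poin_\cC(y)\, t^{\#\cC}(1-t)^{n-\#\cC}$ to anything indexed by maximal chains of $\Sigma_n$. The paper does not attempt to extract this from a specialization of a quasisymmetric function; it imports from \cite{poincareextended} the identity $\Num_\cA(y,t) = \sum_{\cM,Y} y^{\#Y}\, t^{\asc(\lambda(\cM,Y))}$, valid for any $R$-labeling $\lambda$ of the lattice of flats, and then observes that under the bijection $\Sym_{n+1}\times\Perm(n) \leftrightarrow \{\text{maximal chains in } \cL\}\times\{\text{subsets}\}$ one has $\#Y = \ino(w,\sigma)$ and $\lambda(\cM,Y) = \lambda(w,\sigma)$ (this is \Cref{prop:numeratorbyRlabelingSym}). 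After that reduction the remaining task is purely the fixed-$w$ identity $\sum_\sigma y^{\ino}t^{\asc(\lambda(w,\sigma))} = \sum_\sigma y^{\ino}t^{\des(\sigma)}$, which is where the posets $P_{w,Y}$, their vertex labeling $\Lambda$ with $\Lambda(\sigma)=\lambda(w,\sigma)$, and the order-reversal lemma (\Cref{lma:reversecovers}) enter. Without the $R$-labeling formula, your ``appropriate $t$-specialization of the alphabet'' would have to reprove a substantial result of the prior work, and as sketched it does not produce the factor $t^{\#\cC}(1-t)^{n-\#\cC}$.
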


We further refine this theorem in the remainder of this introduction and then prove it in \Cref{sec:proofs}.
Beside its natural appearance, the motivation for studying this companion statistic for the symmetric group is its connection to left-to-right minima for general real reflection groups.

\begin{remark}[Motivation]
\label{rem:generaltype}
  For a real reflection group~$W$, \Cref{qu:companionstatistic} becomes the question of providing a companion statistic for descents on $\Perm(n)$ in terms of the group~$W$,
    \[
      \Num_\cA(y,t) = \sum_{\substack{w \in W \\ \sigma \in \Perm(n)}} y^{\stat(w,\sigma)}\cdot t^{\des(\sigma)}\,.
    \]
  It is known that $\Num_\cA(y,0) = \Poin(\cA,y)$, where the latter is the Poincaré polynomial of the arrangement, see~\cite{MaglioneVoll}.
  For reflection arrangements $\cA_W$, this corresponds to
  \[
    \Poin(\cA_W,y) = \sum_{w \in W} y^{\ell_R(w)} \,,
  \]
  where $\ell_R : W \rightarrow \NN$ denotes the \Dfn{reflection length} on~$W$ given by the minimal number~$k$ such that~$w = r_1 \cdots r_k$ for reflections $r_1,\dots,r_k \in~W$.
  We thus obtain that the \emph{yet-to-be-found} companion statistic on $W \times \Perm(n)$ needs to be equidistributed with the reflection length on the slice~$W \times \{ \id \}$ for the identity permutation $\id = [1,\dots,n] \in \Perm(n)$,
  \[
    \sum_{w \in W} y^{\stat(w,\id)} = \sum_{w \in W} y^{\ell_R(w)} \,.
  \]
  For the symmetric group, it is immediate from the definition that the companion statistic~$\ino(w,\id)$ records exactly the \textbf{non-left-to-right minima} of the permutation~$w$.
  It is therefore a generalization of non-left-to-right minima.
  For general~$W$, a companion statistic~$\stat : W \times \Perm(n)$ would thus necessarily be a version of generalized non-left-to-right minima that is equidistributed on~$W \times \{\id\}$ with reflection length.
\end{remark}

\subsection{Refining the main result}

In order to prove \Cref{thm:mainresult}, we make use of a formula for the numerator polynomial~$\Num_\cA(y,t)$ based on an $R$-labeling (see \Cref{sec:rewritingnumpoly}) on the lattice of flats of $\cB_n$.
The set partition lattice~$\Pi_n$ has an $R$-labeling~$\lambda$ given by labeling a cover relation merging two blocks~$B$ and~$B'$ by $`$, see~\cite[Example~2.9]{MR0570784}.
We extend this $R$-labeling from the set partition lattice to an edge-labeling of the poset of set compositions~$\Sigma_n$ by defining~$\lambda(w,\sigma)=(\lambda_1,\dots,\lambda_n)$ for $w\in \Sym_{n+1}$ and $\sigma \in \Perm(n)$ by
\begin{equation}
\label{labelingsetcompo}
  \lambda_i = \varepsilon(B|B') \cdot \max\big\{\min(B), \min(B')\big\}
\end{equation}
where $B,B'$ are the merging blocks at the $i$-th edge of the chain $(w,\sigma)$,
and $\varepsilon(B|B') \in \{\pm 1\}$ is positive if $\min(B) < \min(B')$ and negative otherwise.
In particular,
\begin{equation}
\label{eq:Ino}
  \ino(w,\sigma) = \Big|\big\{\lambda_i \in \lambda(w,\sigma) \mid \lambda_i > 0\big\}\Big|\,.
\end{equation}
Since these two edge-labelings depend only on the merging blocks, the natural surjection from~$\Sigma_n$ to~$\Pi_n$ (given by forgetting the order of the blocks) maps an edge label to its absolute value.
If not explicitly stated otherwise, we denote by~$\lambda$ the signed version of the edge-labeling.
We obtain the following proposition that we prove in \Cref{sec:rewritingnumpoly}.

\begin{proposition}
\label{prop:numeratorbyRlabelingSym}
    For the braid arrangement $\cB_n$, we have
    \[
    \Num_{\cB_n}(y,t) = \sum_{\substack{w \in \Sym_{n+1} \\ \sigma \in \Perm(n)} }
    y^{\ino(w,\sigma)} \cdot  t^{\asc(\lambda(w,\sigma) )}\,,
    \]
    where $\asc\big(\lambda(w,\sigma) \big) = \big|\{ i \mid \lambda_i < \lambda_{i+1}\}\big|$ is the number of ascents of $\lambda(w,\sigma) = (\lambda_1,\dots,\lambda_n)$.
\end{proposition}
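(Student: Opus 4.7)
The plan is to prove this proposition in \Cref{sec:rewritingnumpoly} by combining a general $R$-labeling expression for the numerator polynomial $\Num_\cA(y,t)$ with a lift from the lattice of flats $\Pi_n$ to the face poset $\Sigma_n$.

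First, starting from the defining expansion
\[
\operatorname{cfHP}_\cA(y,t) = \sum_{\cC} \Poin_\cC(y) \left(\frac{t}{1-t}\right)^{\#\cC},
\]
I would expand each factor $\Poin([C_i,C_{i+1}];y)$ of $\Poin_\cC(y)$ using the restriction of the $R$-labeling of $\cL$ to the interval $[C_i,C_{i+1}]$, via the standard Bj\"orner--Wachs theory. Swapping the order of summation, from chains $\cC \subseteq \cL\setminus V$ together with their max-chain refinements to max chains $m$ of $\cL$ together with sub-selections $\cC \subseteq m$, and clearing the denominator by multiplying through by $(1-t)^n$, should yield a general closed formula of the shape
\[
\Num_\cA(y, t) = \sum_{m} F(\lambda(m))(y,t),
\]
where $m$ ranges over the max chains of $\cL$ and $F$ depends only on the label sequence $\lambda(m) = (\ell_1, \dots, \ell_n)$.

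Second, I would specialize to $\cL = \Pi_n$ with the max-min Bj\"orner labeling and lift the sum to $\Sigma_n$ via the natural surjection $\Sigma_n \twoheadrightarrow \Pi_n$ forgetting block orders. Each max chain of $\Pi_n$ has exactly $2^n$ preimages in $\Sigma_n$, parametrized by sign vectors $\varepsilon \in \{\pm 1\}^n$ that record, at each of the $n$ merge steps, which of the two merging blocks has the smaller minimum. This is precisely the data encoded by the signed labeling in~\eqref{labelingsetcompo}, and $\ino(w,\sigma)$ then counts positive signs by~\eqref{eq:Ino}, while $\asc(\lambda(w,\sigma))$ counts ascents of the signed sequence $(\varepsilon_1\ell_1,\dots,\varepsilon_n\ell_n)$.

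The main technical step---and the anticipated main obstacle---is verifying the per-chain identity
\[
F(\ell_1, \dots, \ell_n)(y, t) = \sum_{\varepsilon \in \{\pm 1\}^n} y^{\#\{i : \varepsilon_i = +1\}} \cdot t^{\asc(\varepsilon_1 \ell_1, \dots, \varepsilon_n \ell_n)},
\]
for any label sequence arising from a max chain of $\Pi_n$. The key combinatorial input is that the ascent structure of the signed sequence depends in a controlled way on consecutive sign patterns: an ascent is forced at $(-,+)$, forbidden at $(+,-)$, and governed by the unsigned comparison of $\ell_i$ versus $\ell_{i+1}$ at the uniform patterns $(+,+)$ and $(-,-)$. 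Once this identification is established, summing the right-hand side over max chains of $\Pi_n$ rewrites as a summation over pairs $(w,\sigma) \in \Sym_{n+1} \times \Perm(n)$, yielding the stated formula.
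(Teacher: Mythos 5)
Your proposal ends up at the same proof as the paper in the step that actually matters here: the identification of $\Sym_{n+1}\times\Perm(n)$ with pairs consisting of a maximal chain of $\Pi_n$ and a subset $Y\subseteq\{1,\dots,n\}$, via the $2^n$-to-$1$ surjection $\Sigma_n\twoheadrightarrow\Pi_n$, under which $\ino(w,\sigma)=\#Y$ and $\lambda(w,\sigma)=\lambda(\cM,Y)$. That is precisely the bijection \eqref{eq:faceposetbij}, and it is the paper's entire proof of \Cref{prop:numeratorbyRlabelingSym}. The difference lies in your first half: you set out to re-derive the formula
\[
\Num_\cA(y,t)\;=\;\sum_{\cM,Y} y^{\#Y}\cdot t^{\asc(\lambda(\cM,Y))}
\]
from the definition of $\operatorname{cfHP}_\cA$ via Bj\"orner--Wachs theory, and you explicitly leave the resulting per-chain identity for $F(\ell_1,\dots,\ell_n)$ as the ``main obstacle''. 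That identity is not something you need to establish in this paper: it is exactly \eqref{eq: Num as sum over ME}, proved in \cite{poincareextended} for any central essential arrangement whose lattice of flats carries an $R$-labeling, and the paper simply cites it. So there is no genuine gap in your route --- it closes immediately once the unfinished computation is replaced by that citation --- but as written the proposal does leave its self-declared hardest step unproven. If you do want a self-contained derivation, the sign-pattern analysis you sketch (ascent forced at $(-,+)$, forbidden at $(+,-)$, and decided by the unsigned comparison, in opposite directions, at $(+,+)$ and $(-,-)$) is the right skeleton, but carrying it out is real work that belongs to \cite{poincareextended} rather than to this proposition.
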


This description looks similar to \Cref{thm:mainresult}.
But this is not quite the case since here, both statistics depend on~$w$ and~$\sigma$, so this is \textbf{not} a companion statistic for descents (or ascents) on~$\Perm(n)$.
We in particular emphasize that we do \emph{not have a bijection}~$\varphi$ on $\Sym_{n+1} \times \Perm(n)$ sending $\asc\big(\lambda(w,\sigma) \big)$ to $\des\big(\varphi(\sigma)\big)$ while leaving $\ino(w,\sigma) = \ino\big(w,\varphi(\sigma)\big)$ invariant.

\begin{example}
  For~$\Sigma_2$, we obtain the edge-labeling
  \begin{center}
    \begin{tikzpicture}
        \tikzstyle{vertex} = [rectangle, text width=4em, align=center]
        \tikzstyle{edge} = [shorten <= 0.25pt, shorten >=0.25pt]
        \tikzstyle{whiteedge} = [white, line width=0.5em]
        \tikzstyle{oedge} = [orange, dashed, shorten <= 0.25pt, shorten >=0.25pt, line width=0.2em]
        \tikzstyle{vedge} = [violet, shorten <= 0.25pt, shorten >=0.25pt, line width=0.2em]

        \node[style=vertex]             (top)   at (0,0) {$123$};
        \node[style=vertex,right=0em]   (21)    at ($(top.south) - (0,4em)$) {$3|12$};
        \node[style=vertex,left=0em]   (13)    at (21.west) {$13|2$};
        \node[style=vertex,left=0em]   (23)    at (13.west) {$1|23$};
        \node[style=vertex,left=0em]   (12)    at (23.west) {$12|3$};
        \node[style=vertex,right=0em]   (32)    at (21.east) {$23|1$};
        \node[style=vertex,right=0em]   (31)    at (32.east) {$2|13$};

        \node[style=vertex,right=0em]	(123) at ($(12.west) - (0,5em)$) {$1|2|3$};
        \node[style=vertex,right=0em]	(132) at (123.east)	{$1|3|2$};
        \node[style=vertex,right=0em]	(312) at (132.east)	{$3|1|2$};
        \node[style=vertex,right=0em]	(321) at (312.east)	{$3|2|1$};
        \node[style=vertex,right=0em]	(231) at (321.east)	{$2|3|1$};
        \node[style=vertex,right=0em]	(213) at (231.east)	{$2|1|3$};

        \draw[style=edge] (top) -- (12) node [midway,above left]
        {\scriptsize $3$};
        \draw[style=vedge] (top) -- (23) node [midway, left]
        {\scriptsize $2 \ $};
        \draw[style=oedge] (top) -- (13) node [midway, left]
        {\scriptsize $2$};
        \draw[style=edge] (top) -- (21) node [midway, right]
        {\scriptsize $\overline{3}$};
        \draw[style=edge] (top) -- (32) node [midway, right]
        {\scriptsize $\ \overline{2}$};
        \draw[style=edge] (top) -- (31) node [midway,above right]
        {\scriptsize $\overline{2}$};

        \draw[style=edge] ($(12.south) + (0.75em,0)$) -- ($(213.north)-(0.75em,0)$) node [above right, pos=0.95] {\scriptsize $\overline{2}$};

        \draw[style=whiteedge] (12) -- (123);
        \draw[style=whiteedge] (23) -- (123);
        \draw[style=whiteedge] (23) -- (132);
        \draw[style=whiteedge] (13) -- (132);
        \draw[style=whiteedge] (13) -- (312);
        \draw[style=whiteedge] (21) -- (312);
        \draw[style=whiteedge] (21) -- (321);
        \draw[style=whiteedge] (32) -- (321);
        \draw[style=whiteedge] (32) -- (231);
        \draw[style=whiteedge] (31) -- (231);
        \draw[style=whiteedge] (31) -- (213);

        \draw[style=edge] (12) -- (123) node [midway,above left]
        {\scriptsize $2$};
        \draw[style=vedge] (23) -- (123) node [midway, left]
        {\scriptsize $3$};
        \draw[style=edge] (23) -- (132) node [midway,left]
        {\scriptsize $\overline{3}$};
        \draw[style=edge] (13) -- (132) node [midway, left]
        {\scriptsize $3$};
        \draw[style=oedge] (13) -- (312) node [midway, below left]
        {\scriptsize $\overline{3}$};
        \draw[style=edge] (21) -- (312) node [near end, right]
        {\scriptsize $2$};
        \draw[style=edge] (21) -- (321) node [midway,above right]
        {\scriptsize $\overline{2}$};
        \draw[style=edge] (32) -- (321) node [midway,above left]
        {\scriptsize $\overline{3}$};
        \draw[style=edge] (32) -- (231) node [midway,above left]
        {\scriptsize $3$};
        \draw[style=edge] (31) -- (231) node [midway,above left]
        {\scriptsize $\overline{3}$};
        \draw[style=edge] (31) -- (213) node [midway, right]
        {\scriptsize $3$};
    \end{tikzpicture}
  \end{center}
  (Here and below, we often write $\overline{x} = -x$ for $x \in \ZZ$.)
  The sequences $\lambda(w,\sigma)$ for the chains $({\violet 123},{\violet 21})$ and $({\orange 312},{\orange 12})$ are in particular
  \[
    \lambda({\violet 123},{\violet 21})= (3,2)
    \quad \text{and} \quad
    \lambda({\orange 312},{\orange 12})= (\hb{3},2)\,.
  \]
  We obtain $\asc\big(\lambda ( {\violet 123},{\violet 21}) \big) = 0$ and $\asc\big( \lambda({\orange 231},{\orange 12}) \big) = 1$.
\end{example}

The following theorem refines \Cref{thm:mainresult}.

\begin{theorem}
    \label{thm:mainresult local}
    Let~$w \in \Sym_{n+1}$, then
    \begin{align*}
        \sum_{\sigma \in \Perm(n) }
        y^{\ino(w,\sigma)} \cdot t^{\asc(\lambda(w,\sigma) )}
        &= \sum_{\sigma \in \Perm(n) }
        y^{\ino(w,\sigma)} \cdot t^{\des(\sigma)},
    \end{align*}
\end{theorem}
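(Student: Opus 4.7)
My proof plan is to establish \Cref{thm:mainresult local} by grouping the permutations $\sigma \in \Perm(n)$ into fibers on which $\ino(w, \sigma)$ is constant and then invoking the theory of reverse $(P,\omega)$-partitions on each fiber. For fixed $w$, the natural refinement is by the sign vector
\[
\varepsilon(w,\sigma) = \big(\operatorname{sgn}(\lambda_1),\dots,\operatorname{sgn}(\lambda_n)\big) \in \{\pm 1\}^n,
\]
since by~\eqref{eq:Ino} the integer $\ino(w,\sigma)$ is the number of $+$-entries in $\varepsilon(w,\sigma)$ and hence depends only on this vector. Writing $\Perm(n)_{w,\varepsilon} := \{\sigma \mid \varepsilon(w,\sigma) = \varepsilon\}$, the theorem reduces to the fiberwise equidistribution
\[
\sum_{\sigma \in \Perm(n)_{w,\varepsilon}} t^{\asc(\lambda(w,\sigma))} \ =\ \sum_{\sigma \in \Perm(n)_{w,\varepsilon}} t^{\des(\sigma)}.
\]

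For each sign fiber, I would construct a labeled poset $(P_{w,\varepsilon}, \omega_{w,\varepsilon})$ whose linear extensions are in bijection with $\Perm(n)_{w,\varepsilon}$, and such that the descent set of a linear extension relative to $\omega_{w,\varepsilon}$ equals $\Asc(\lambda(w,\sigma))$. A second labeling $\omega'_{w,\varepsilon}$ of the same underlying poset would be chosen so that its descent set on linear extensions recovers $\Des(\sigma)$. Stanley's fundamental theorem of reverse $(P,\omega)$-partitions then implies that the quasisymmetric function $K_{P_{w,\varepsilon},\omega_{w,\varepsilon}}$ depends only on the set of \emph{strict} cover relations of the labeled poset. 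If both labelings induce the same strict covers, then the two generating functions
\[
\sum_\sigma F_{n,\Des_{\omega_{w,\varepsilon}}(\sigma)} \quad\text{and}\quad \sum_\sigma F_{n,\Des_{\omega'_{w,\varepsilon}}(\sigma)}
\]
coincide as quasisymmetric functions, and a principal specialization produces the desired identity in~$t$.

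The main obstacle will be the explicit construction of $(P_{w,\varepsilon}, \omega_{w,\varepsilon})$ together with the verification that the two labelings share the same strict covers. The subtlety is that $\lambda_i$ depends on the full prefix $\sigma(1),\dots,\sigma(i)$ rather than only on $\sigma(i)$, so no naive labeling of the antichain on the bars $\{1,\dots,n\}$ can directly capture the statistic $\asc(\lambda(w,\sigma))$. The key combinatorial lemma I expect to need tracks the propagation of block minima through the merging process, describing precisely when the minimum of a newly merged block is inherited from the left versus the right constituent. I expect each fiber $\Perm(n)_{w,\varepsilon}$ to be realized as the set of linear extensions of a forest-shaped poset on the bars, with $\omega_{w,\varepsilon}$ encoding the realized signed labels and $\omega'_{w,\varepsilon}$ a compatible naturally-ordered labeling; verifying that these two labelings flip the same cover relations is then the heart of the argument, after which Stanley's theorem delivers the statement.
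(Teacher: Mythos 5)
Your overall architecture — partition $\Perm(n)$ into fibers on which $\ino(w,\cdot)$ is constant, realize each fiber as the linear extensions of a labeled poset, and compare two labelings with the same relative order on covers via Stanley's theory of reverse $(P,\omega)$-partitions — is exactly the paper's strategy. However, your choice of fibration is wrong, and this is a genuine gap rather than a cosmetic one. You fiber by the position-indexed sign vector $\varepsilon(w,\sigma)=(\operatorname{sgn}(\lambda_1),\dots,\operatorname{sgn}(\lambda_n))$, but the fiberwise equidistribution of $\asc(\lambda(w,\sigma))$ and $\des(\sigma)$ fails on these fibers. Already for $n=2$ and $w=213$ one computes $\lambda(213,12)=(\hb{2},3)$ and $\lambda(213,21)=(3,\hb{2})$, so the two permutations lie in \emph{different} sign-vector fibers, each a singleton; on the fiber $\{12\}$ the left-hand side is $t^{\asc(\hb{2},3)}=t$ while the right-hand side is $t^{\des(12)}=1$. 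The two sides only balance after the singletons are recombined. Consequently no labeled poset construction can make your reduction work: the statement you are reducing to is false.

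The correct fibration, used in the paper, is by the set of positive \emph{values} $\Ino(w,\sigma)=\set{\lambda_i\in\lambda(w,\sigma)}{\lambda_i>0}$ (which still determines $\ino(w,\sigma)$ by \eqref{eq:Ino}, since the absolute values of the $\lambda_i$ always form $\{2,\dots,n+1\}$); in the example above both permutations have $\Ino=\{3\}$ and lie in one fiber. For each admissible $Y$ the paper builds a poset $P_{w,Y}$ on the bar positions $\{1,\dots,n\}$ with an injective vertex-labeling $\Lambda$ satisfying $\Lambda(\sigma)=\lambda(w,\sigma)$ and $\Lin(P_{w,Y})=\{\sigma \mid \Ino(w,\sigma)=Y\}$ (\Cref{sec: poset} and \Cref{lma: Lin in P}); the key combinatorial fact is that on every cover $a\prec_P b$ one has $a<b$ if and only if $\Lambda(a)>\Lambda(b)$, after which \Cref{prop:Pomegapartitions} — your Stanley step, applied to $\Lambda$ against the natural labeling — delivers the identity, and in fact the stronger set-valued refinement \Cref{thm:mainresult localocal}. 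Your remaining ingredients (forest-shaped Hasse diagram, tracking how block minima propagate through the merges, expansion in fundamental quasisymmetric functions) all appear in the paper's argument; the missing idea is that the invariant to fiber by is the set of positive labels, not their positions.
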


We prove a multivariate version of this result as stated in \Cref{thm:mainresult localocal} in \Cref{sec:proofs}.

\begin{remark}
    The right-hand side of \Cref{thm:mainresult} depends on multiple choices.
    The ``max-of-min'' labeling in~\eqref{labelingsetcompo} could have also been ``min-of-max'', the ``in-order'' statistic~$\ino(w,\sigma)$ could have been an ``out-of-order'' statistic, and descents on~$\sigma$ could have been ascents.
    While multiple combinations yield the same generating function in  \Cref{thm:mainresult}, the given version is the \textbf{unique choice} for which the refinement in \Cref{thm:mainresult local} holds.
\end{remark}

\begin{remark}
  As written in \cite[Example~2.9]{MR0570784}, the ``max-of-min'' labeling in~\eqref{labelingsetcompo} is the edge-labeling given in~\cite[Proposition~2.4]{MR0354473} for supersolvable lattices, using the~\emph{$\cM$-chain}
    \[
      1\ |\ 2\ |\ 3\ |\ \cdots\ |\ n+1 \prec 12\ |\ 3\ |\  \cdots \ |\  n+1 \prec \dots \prec 1\cdots n\ |\ n+1 \prec 1\cdots n+1
    \]
    in $\Pi_n$.
  This $\cM$-chain induces the admissible map $H_{ij} \mapsto j$, which in turn induces the ``max-of-min'' $R$-labeling.
  This admissible map, and thus the ``max-of-min'' $R$-labeling, may as well be obtained by considering the induced edge-labeling of an admissible map given in~\cite[Proposition~2.2]{MR0354473} for geometric lattices, using the total order
    \[
      H_{12} < H_{13} < H_{23} < H_{14} < H_{24} < H_{34} < \dots < H_{1(n+1)} < \dots < H_{n(n+1)}
    \]
    of the atoms in~$\Pi_n$.
    It is easy to check that no two labels~$H_{ij}$ and~$H_{i'j}$ can occur along a maximal chain in~$\Pi_n$, implying again that~$H_{ij} \mapsto j$ is an admissible map.
\end{remark}

\section{Proofs}
\label{sec:proofs}

In this section, we
\begin{enumerate}
  \item prove \Cref{prop:numeratorbyRlabelingSym} (\Cref{sec:rewritingnumpoly}),
  \item provide the most refined version of the main result in \Cref{thm:mainresult localocal} (\Cref{subsec: main result refined}),
  \item collect several properties of the $R$-labeling and reestablish a symmetry of $\Num_{\cB_n}$ (\Cref{sec: edge-labeling}),
  \item define a poset associated to $w \in \Sym_{n+1}$ and certain subsets $Y \subseteq \{1,\dots,n\}$ together with an injective vertex-labeling (\Cref{sec: poset}),
  \item use reverse $(P,\omega)$-partitions to prove the main result (\Cref{subsec: proof mainresult}).
\end{enumerate}

\subsection{Rewriting the numerator polynomial}
\label{sec:rewritingnumpoly}

Let~$P$ be a finite and ranked poset of rank~$n$ with unique minimal element~$\widehat{0}$ and unique maximal element~$\widehat{1}$.
Denote its set of cover relations by~$E(P) = \set{(x,y)}{ x \prec y }$.
An edge-labeling $\lambda: E(P) \to \NN_+$ is an \Dfn{$\boldsymbol{R}$-labeling} if every interval in~$P$ contains a unique $\lambda$-weakly increasing maximal chain.
Throughout this section, we assume the edge-labeling to be injective along maximal chains.
For a maximal chain~$\cM = \{ \cM_0 \prec \dots \prec \cM_n\}$, define its labeling as $\lambda(\cM)=\big(\lambda(\cM_0,\cM_1), \dots, \lambda(\cM_{n-1},\cM_n)\big)$.
For a subset $Y \subseteq \{1,\dots,n\}$, we slightly modify $\lambda(\cM)$ to obtain $\lambda(\cM,Y) = \big(\!\pm \lambda(\cM_0,\cM_1), \dots, \pm \lambda(\cM_{n-1},\cM_n)\big)$ where the sign is positive if $i \in Y$ and negative if $i \notin Y$.

\medskip

The lattice of flats~$\cL$ of a hyperplane arrangement~$\cA$ always admits an $R$-labeling~$\lambda$~\cite[Example 3.14.5]{MR2868112}.
The numerator polynomial $\Num_\cA$ was shown in~\cite{poincareextended} to have the description
\begin{equation}
\label{eq: Num as sum over ME}
  \Num_\cA( y, t) = \sum_{\cM,Y} y^{\#Y} \cdot t^{\asc( \lambda(\cM,Y) ) }
\end{equation}
where the sum ranges over all maximal chain $\cM$ in~$\cL$ and all subsets~$Y \subseteq \{1,\dots, n\}$ for~$n = \rk(\cL)$, and where~$\asc\big(\lambda(\cM,Y)\big)$ denotes the number of ascents of $\lambda(\cM,Y)$.
We refer to~\cite{poincareextended} for further details.

\begin{example}\label{ex:NumWithME}
    Label the edges in the lattice of flats of the braid arrangement $\cB_2$ by the $R$-labeling~$\lambda$ defined above.
    The table on the right gives the relation between the signed labels along the three maximal chains $\cM_{ij} = \{V \prec H_{ij} \prec \boldsymbol{0}\}$, depending on the subsets~$Y \subseteq \{1,2\}$.
    \begin{center}
        \begin{minipage}[ct]{0.45\textwidth}
            \begin{tikzpicture}
                \tikzstyle{vertex} = [rectangle, text width=4em, align=center]
                \tikzstyle{edge} = [shorten <= 0.25pt, shorten >=0.25pt]

                \node[style=vertex]           (top)   at (0,0) {$\boldsymbol{0}$};
                \node[style=vertex,right=0em] (H13)   at ($(top.west) - (0,4em)$) {$H_{13}$};
                \node[style=vertex,left=0em]  (H12)   at (H13.west) {$H_{12}$};
                \node[style=vertex,right=0em] (H23)   at (H13.east) {$H_{23}$};
                \node[style=vertex,right=0em]	(bot)   at ($(H13.west) - (0,4em)$) {$V$};

                \draw[style=edge] (top) -- (H12) node [midway,above left]
                {\scriptsize $3$};
                \draw[style=edge] (top) -- (H23) node [midway,above right]
                {\scriptsize $2$};
                \draw[style=edge] (top) -- (H13) node [midway,right]
                {\scriptsize $2$};
                \draw[style=edge] (bot) -- (H12) node [midway,below left]
                {\scriptsize $2$};
                \draw[style=edge] (bot) -- (H23) node [midway,below right]
                {\scriptsize $3$};
                \draw[style=edge] (bot) -- (H13) node [midway,right]
                {\scriptsize $3$};
            \end{tikzpicture}
        \end{minipage}
        \begin{minipage}[ct]{0.45\textwidth}
            \renewcommand*{\arraystretch}{1.2}
            \begin{tabular}{|c|c|c|c|c|}
                \hline
                $Y$ & $\emptyset$ & $\{1\}$ & $\{2\}$ & $\{1,2\}$ \\
                \hline
                $\cM_{12}$ & $-2>-3$ & $2>-3$ & $-2< 3$ & $2<3$ \\[0.25em]
                \hline
                $\cM_{13}$ & \multirow{2}{*}{$-3<-2$} & \multirow{2}{*}{$3 > -2$} & \multirow{2}{*}{$-3<2$} & \multirow{2}{*}{$3>2$} \\[0.25em]
                $\cM_{23}$ & & & & \\
                \hline
            \end{tabular}
        \end{minipage}
    \end{center}
    Using~\eqref{eq: Num as sum over ME}, the numerator polynomial for~$\cB_2$ is
    \begin{align*}
        \Num_{\cB_2}(y,t) = y^0 \cdot (t+2) + y^1 \cdot (3t +3) + y^2 \cdot (2t+1)
        = (1+3y+2y^2) + (2 + 3y + y^2)t
    \end{align*}
    in agreement with \Cref{ex:coarseflagHP}.
\end{example}

For a simplicial arrangement~$\cA$, the set $\regions \times \Perm(n)$ is in natural bijection with maximal chains in the \Dfn{face poset}~$\Sigma$ of~$\cA$.
The elements of this poset are the faces of~$\cA$, i.e., the cones in the simplicial fan defined by~$\cA$, and the order is reverse inclusion of cones.
The minimal elements of~$\Sigma$ are in particular the closures of the regions~$\cR$.
Sending such a maximal chain in the face poset to its corresponding maximal chain in the lattice of flats gives a natural bijection
\begin{equation}
\label{eq:faceposetbij}
  \regions \times \Perm(n)\ \tilde\longleftrightarrow\ \big\{ \text{maximal chains in } \cL\big\} \times \big\{ \text{subsets of } \{1,\dots,n\}\big\}\,.
\end{equation}
We have already seen this correspondence in the case of the braid arrangement~$\cB_n$.
The bijection between $\Sym_{n+1} \times \Perm(n)$ and maximal chains in~$\Sigma_n$ was described in the paragraph before and inside \Cref{ex:set comp}.
Moreover, sending a set composition to its corresponding set partition yields the subset $Y \subseteq \{1,\dots,n\}$ given by $Y = \big\{i \in \{1,\dots,n\} \mid \lambda_i \text{ positive} \big\}$ for~$\lambda_i$ as given in~\eqref{labelingsetcompo}.

\begin{proof}[Proof of \Cref{prop:numeratorbyRlabelingSym}]
  Consider the bijection in~\eqref{eq:faceposetbij} in the case of the braid arrangement as described in the previous paragraph.
  Let now $(w,\sigma) \in \Sym_{n+1} \times \Perm(n)$, and let $(\cM,Y)$ be its image under the bijection.
  Then, by construction, we have $\ino(w,\sigma) = \#Y$ and at the same time~$\lambda(w,\sigma) = \lambda(\cM,Y)$.
\end{proof}

\subsection{Further refinement of the main result}
\label{subsec: main result refined}

For $(w,\sigma)\in \Sym_{n+1} \times \Perm(n)$,
let
\[
  \Ino(w,\sigma)=\set{\lambda_i \in \lambda(w,\sigma)}{\lambda_i > 0}
\]
denote the set of positive integers in $\lambda(w,\sigma)$.
We have seen this set already in~\eqref{eq:Ino} where we observe that $\ino(w,\sigma) = |\Ino(w,\sigma)|$.
Moreover, let~$\Des(\sigma)=\bigset{i\in \{1,\dots, n-1\} }{\sigma(i) > \sigma(i+1)}$ denote the \Dfn{descent set} of $\sigma$ and similarly, let~$\Asc(\lambda\big(w,\sigma)\big)$ denote the \Dfn{ascent set} of $\lambda(w,\sigma)$.
We finally consider refined indeterminates~$y_1,\dots,y_{n},t_1,\dots,t_{n-1}$ and write
\[
\boldsymbol{y}^Y = \prod_{i\in Y} y_i \,, \quad \boldsymbol{t}^T = \prod_{i\in T} t_i
\]
for subsets~$Y \subseteq \{1,\dots, n\}$ and~$T\subseteq \{1,\dots, n-1\}$.

\begin{theorem}
\label{thm:mainresult localocal}
  Let $w\in \Sym_{n+1}$, then
  \[
  \sum_{\sigma \in \Perm(n)}
  \boldsymbol{y}^{\Ino(w,\sigma)} \cdot
  \boldsymbol{t}^{\Asc(\lambda(w,\sigma) )}
  =
  \sum_{\sigma \in \Perm(n)}
  \boldsymbol{y}^{\Ino(w,\sigma)} \cdot
  \boldsymbol{t}^{\Des(\sigma)} \,.
  \]
\end{theorem}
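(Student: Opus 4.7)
My plan is to apply the fundamental theorem of reverse $(P,\omega)$-partitions to a labeled poset built from $w$, obtaining both sides of the desired identity as two quasisymmetric expansions of the same generating function. The key is that, although the authors emphasize there is no bijection on $\Perm(n)$ realizing the equidistribution directly, such equidistribution follows indirectly from the $(P,\omega)$-partition machinery, since two labelings of the same poset inducing the same strict/weak pattern along cover relations give the same quasisymmetric function and hence the same sum over linear extensions.

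First, I would fix $w \in \Sym_{n+1}$ and group $\sigma \in \Perm(n)$ according to the value of $\Ino(w,\sigma)$; since $\boldsymbol{y}^{\Ino(w,\sigma)}$ appears on both sides, the theorem reduces to the refined identity, for each realizable set $V$,
\[
\sum_{\sigma : \Ino(w,\sigma) = V} \boldsymbol{t}^{\Asc(\lambda(w,\sigma))} = \sum_{\sigma : \Ino(w,\sigma) = V} \boldsymbol{t}^{\Des(\sigma)}.
\]
Using the two properties of the $R$-labeling collected in \Cref{sec: edge-labeling}, I would construct a ranked poset $P_{w,V}$ on the set of bar positions $\{1,\dots,n\}$ together with an injective vertex-labeling $\omega_{w,V}$, as announced in \Cref{sec: poset}. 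The cover relations should encode exactly which orderings of bar deletions yield a merge pattern consistent with the prescribed $V$, so that linear extensions of $(P_{w,V},\omega_{w,V})$ are in bijection with $\{\sigma \in \Perm(n) : \Ino(w,\sigma) = V\}$. The labeling $\omega_{w,V}$ would then be tuned so that the $\omega$-descent set of a linear extension recovers $\Asc(\lambda(w,\sigma))$, while the natural labeling by position on $\{1,\dots,n\}$ recovers $\Des(\sigma)$.

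By the fundamental theorem of reverse $(P,\omega)$-partitions, the linear-extension sum $\sum_L F_{n,\Des_\omega(L)}(\boldsymbol{x})$ equals the reverse $(P,\omega)$-partition generating function $K_{(P_{w,V},\omega_{w,V})}(\boldsymbol{x})$, and the latter depends on the labeling only through the strict-versus-weak pattern of its cover relations. The crux of the proof is to verify that the two labelings above induce the same pattern of strict covers on $P_{w,V}$, so that both linear-extension sums collapse to the same quasisymmetric function; extracting the $\boldsymbol{t}$-monomials fiber-by-fiber and reassembling with the $\boldsymbol{y}^V$ weights then yields the theorem. The main obstacle is precisely this compatibility check: showing that the sign of $\lambda_i$ along a cover agrees with the natural order on bar positions exactly along the defining covers of $P_{w,V}$. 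This is where the two properties of the $R$-labeling from \Cref{sec: edge-labeling} should enter, by translating the max-of-min rule into a local statement about signs that matches the combinatorics of the poset built in \Cref{sec: poset}.
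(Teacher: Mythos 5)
Your proposal follows essentially the same route as the paper: decompose $\Perm(n)$ into the fibers of $\sigma \mapsto \Ino(w,\sigma)$, realize each fiber as the set of linear extensions of a labeled poset $P_{w,Y}$, and invoke the invariance of the reverse $(P,\omega)$-partition generating function $K_{P,\omega}$ under relabelings that preserve the strict/weak pattern on cover relations to equate the $\Asc\big(\lambda(w,\sigma)\big)$- and $\Des(\sigma)$-generating functions fiber by fiber. The two steps you defer --- the explicit iterative construction of $P_{w,Y}$ with its injective vertex-labeling $\Lambda$ satisfying $\Lambda(\sigma)=\lambda(w,\sigma)$ on linear extensions, and the verification that $a<b \Leftrightarrow \Lambda(a)>\Lambda(b)$ along cover relations --- are exactly the content of the paper's \Cref{sec: poset} and \Cref{lma:reversecovers}, so your outline is correct but the substantive work lies precisely in those deferred checks.
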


\begin{example}
  In our running example $\cB_2$, we have
  \begin{center}
  \resizebox{\textwidth}{!}{%
  \begin{tabular}{c||cc|cc|cc|cc}
    $w$ & $\lambda(w,12)$ & $\lambda(w,21)$
    & $\boldsymbol{y}^{\Ino(w,12)}$
    & $\boldsymbol{y}^{\Ino(w,21)}$ & $\boldsymbol{t}^{\Asc(\lambda(w,12))}$
    &$\boldsymbol{t}^{\Asc(\lambda(w,21))}$ & $\boldsymbol{t}^{\Des(12)}$
    & $\boldsymbol{t}^{\Des(21)}$  \\
    \hline
    123 & 23 & 32 & $y_2 y_3$ & $y_2 y_3$
        & $t_1$ & $1$
        & $1$ & $t_1$ \\
    132 & 32 & $\hb{3}2$ & $y_2 y_3$ & $y_2$
        & $1$ & $t_1$
        & $1$ & $t_1$  \\
    213 & $\hb{2}3$ & $3\hb{2}$ & $y_3$ & $y_3$
        & $t_1$ & $1$
        & $1$ & $t_1$  \\
    231 & $3\hb{2}$ & $\hb{3}\hb{2}$ & $y_3$ & $1$
        & $1$ &$t_1$
        & $1$ & $t_1$ \\
    312 & $\hb{3}2$ & $2\hb{3}$ & $y_2$ & $y_2$
        & $t_1$ & $1$
        & $1$ & $t_1$  \\
    321 & $\hb{3}\hb{2}$ & $\hb{2}\hb{3}$ & $1$ & $1$
        & $t_1$ & $1$
        & $1$ & $t_1$
  \end{tabular}
  }
  \end{center}
  To give one concrete entry, the equation in \Cref{thm:mainresult localocal} for $w=213$ is
  \[
      y_3 t_1 + y_3  = y_3 + y_3 t_1 \,.
  \]
\end{example}

\subsection{Behavior of the edge-labeling $\boldsymbol{\lambda}$}
\label{sec: edge-labeling}

Throughout this section, fix $w = w_1\dots w_{n+1}\in \Sym_{n+1}$ and $\sigma\in \Perm(n)$.
Moreover, recall the definition of~$\lambda(w,\sigma)$ given in~\eqref{labelingsetcompo}.
We start with the following two lemmas, for which we set $\overleftarrow{w}=w_{n+1} \cdots w_1 \in \Sym_{n+1}$ sending~$i$ to $w_{n+2-i}$ and ${\sigma_{\uparrow} \in \Perm(n)}$ sending~$i$ to~$n+1-\sigma(i)$.

\begin{lemma}\label{lma:for symmetry}
    Let~$\lambda(w, \sigma) = (\lambda_1 , \dots , \lambda_{n})$.
    Then
    \[
        \lambda(\overleftarrow{w}, \sigma_{\uparrow})
        = (-{\lambda_1} , \dots , -{\lambda_n}) \,.
    \]
\end{lemma}
\begin{proof}
    The~$i$-th cover relation in~$(w,\sigma)$ merges the exact same blocks as the $i$-th cover relation in~$(\overleftarrow{w}, \sigma_{\uparrow})$. 
    The merging blocks only come in opposite order, swapping the sign of~$\lambda_i$.
\end{proof}

\begin{lemma}
\label{lma: edge label behaviour}
  For~$\lambda(w, \sigma) = (\lambda_1 , \dots , \lambda_{n})$, we have
  \[
      \big\{ |\lambda_1| ,\dots, |\lambda_n| \big\} = \big\{ 2, \dots, n+1 \big\} \,.
  \]
\end{lemma}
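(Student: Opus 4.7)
The plan is to track, for each element $k \in \{2, \dots, n+1\}$, the life of $k$ as a block minimum during the merging process, and to argue that each such $k$ contributes the value $k$ to exactly one of the $n$ labels $|\lambda_i|$.

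First I would observe the following monotonicity principle: as we perform the successive merges starting from the all-singleton composition, the minimum of the block containing any fixed element $k$ is non-increasing in time. In particular, once $k$ ceases to be a block minimum, it never becomes one again. At the beginning, the block containing $k$ is the singleton $\{k\}$, which has minimum $k$; at the end, all elements lie in a single block of minimum $1$. Hence for every $k \in \{2,\dots,n+1\}$ there is a \emph{unique} merge during which $k$ is the minimum of its block $B'$ just before the merge but is no longer a minimum just after. By definition that merge combines $B'$ with some $B$ satisfying $\min(B) < k$, so $\max\{\min(B),\min(B')\} = k$ and the corresponding label satisfies $|\lambda_i| = k$.

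Conversely, I would argue that every label value $|\lambda_i|$ arises this way: at the $i$-th merge of $B|B'$, the larger of the two minima $k := \max\{\min(B), \min(B')\}$ equals the minimum of one of the two participating blocks, and after the merge that element is strictly above the new block minimum, so $k$ is indeed "killed" at step $i$. This gives a well-defined map from merge indices $\{1,\dots,n\}$ to $\{2,\dots,n+1\}$ (the value $1$ is excluded because $1$ is the minimum of any block it belongs to, so whenever $1$ participates in a merge the larger minimum is at least $2$; and when $1$ does not participate both minima are already at least $2$). The two maps just described are inverse to each other, giving a bijection between merge indices and $\{2,\dots,n+1\}$, which proves the claim.

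I do not anticipate a real obstacle here; the argument is purely a bookkeeping/lifetime argument for block minima and uses nothing beyond the definition of the labeling in~\eqref{labelingsetcompo} and the fact that merging blocks can only decrease the minimum of the block containing a given element. The only point that deserves a line of care is the exclusion of the value $1$, handled in the parenthetical remark above.
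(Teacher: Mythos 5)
Your proposal is correct and is essentially the paper's own argument: the paper's proof consists precisely of the observation that every number in $\{2,\dots,n+1\}$ occurs exactly once as the maximum of the minima of two merging blocks (and that $1$ never does), which is what your lifetime-of-a-block-minimum bookkeeping establishes in detail.
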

\begin{proof}
    This follows immediately from the observation that every number in $\{2,\dots,n+1\}$ is exactly once the maximum of the minima of two merging blocks.
    (One may observe in addition that the number~$1$ is never such a maximum.)
\end{proof}

Combining these two lemmas yields the following symmetry on~¸$\Num_{\cB_n}$.
More general forms of this symmetry have already been observed in ~\cite[Corollary~3.3]{MaglioneVoll} and~\cite[Corollary~2.16]{poincareextended}.

\begin{corollary}
\label{lma:symmetry}
    It holds that
    \begin{align*}
        \Ino(\overleftarrow{w},\sigma_{\uparrow}) 
        &= 
        \{2,\dots , n+1\} \setminus \Ino(w,\sigma) \\
        \Asc\big(\lambda(\overleftarrow{w},\sigma_{\uparrow})\big)
        &= 
        \{2,\dots , n+1\} \setminus \Asc\big(\lambda(w,\sigma)\big)
        \,.
    \end{align*}
\end{corollary}

For an entry $w_i\in \{1,\dots, n+1\}$ at position $i$ in the one-line notation of $w$, define two positions~$\ell < i < r$ by
\begin{equation}
    \label{eq: left right bar pos}
    \ell = \max\bigset{ j < i }{ w_j < w_i } \,, \quad
    r    = \min\bigset{ j > i }{ w_j < w_i } \,.
\end{equation}
In words,~$\ell$ is the rightmost position left of the position $i$ such that $w_\ell$ is smaller than $w_i$.
Similarly,~$r$ is the leftmost position right of the position $i$ such that $w_r$ is smaller than~$w_i$.
\[
w = \cdots \ w_\ell
\underbrace{\phantom{ sp }\cdots\phantom{ sp }}_{>w_i}
\ w_i \
\underbrace{\phantom{ sp }\cdots\phantom{ sp }}_{>w_i}
w_r \ \cdots
\]
Observe that $\ell= - \infty$ if and only if $w_i$ is a \Dfn{left-to-right minimum} of $w$,
and $r = \infty$ if and only if $w_i$ is a \Dfn{right-to-left minimum}.

We also record the cover relation ranks in~$\Sigma_n$ in which the blocks containing $w_\ell$ and $w_i$, respectively the blocks containing $w_r$ and $w_i$ are merged.
In symbols,
\begin{align*}
    \ell_\sigma &= \max \bigset{ k\in \{1,\dots, n\} }{ \sigma_k \in \{\ell,\dots, i-1 \} } \,, \\
    r_\sigma &= \max \bigset{ k\in \{1,\dots, n\} }{ \sigma_k \in \{ i,\dots, r-1 \} } \,.
\end{align*}
We call $\ell_\sigma$ the \Dfn{last left bar position} of $w_i$ and $r_\sigma$ the \Dfn{last right bar position} of $w_i$.
If either of the two sets in~\eqref{eq: left right bar pos} is empty, the corresponding last (left or right) bar position is set to be infinity.
The reason for this choice of convention is that $\ell_\sigma$ is the rank in which $w_i$ is merged with something smaller in $w$ to the left, and thus set to be infinity if $w_i$ is a left-to-right minimum.
The same reason holds for $r_\sigma$ if $w_i$ is a right-to-left minimum.

\begin{example}
    Let $w=215463 \in \Sym_{6}$ and $\sigma = 14253\in \Perm_{5}$.
    The corresponding maximal chain in~$\Sigma_n$ and its edge-labeling are
    \begin{center}
        \begin{tabular}{ccccccccccc}
            $2|1|5|4|6|3$ & $\prec$ & $12|5|4|6|3$& $\prec$ &$12|5|46|3$& $\prec$& $125|46|3$& $\prec$& $125|346$& $\prec$& $123456 \,,$ \\

            $\lambda(w,\sigma) = ($\phantom{$2$} & $\hb{2},$ & \phantom{$24|1|5|6|3$}&
            $6,$ &\phantom{$24|1|56|3$}& $5,$& \phantom{$124|56|3$}& $\hb{4},$& \phantom{$12456|3$}& $3$& $)\,.$\phantom{$123456 \,.$}
        \end{tabular}
    \end{center}
    The left-to-right minima of $w$ are $\{2,1\}$ and the right-to-left minima are $\{3,1\}$.

    \medskip

    In the picture below, we label the bars in~$w$ by $\sigma^{-1}$, meaning that the bars are removed in this order.
    For each entry~$w_i$, the last left bar position~$\ell_\sigma$ and the last right bar position~$r_\sigma$ are written above in {\red red}, with~$\ell_\sigma$ on the left and~$r_\sigma$ on the right.
    \begin{center}
      \includegraphics{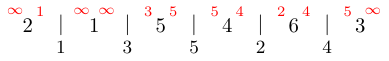}
    \end{center}
\end{example}

The following lemma follows immediately from the above observations.

\begin{lemma}
\label{lma:lambdafromlr}
    Let $\lambda(w,\sigma) = (\lambda_1 ,\dots \lambda_n)$, let $j\in \{1,\dots, n\}$ and $i \in \{1,\dots, n+1\}$.
    Moreover, set~$\ell_\sigma$ and~$r_\sigma$ to be the last (left and, respectively, right) bar positions of~$w$ for the entry~$w_i$.
    Then
    \[
        \lambda_j =
        \begin{cases}
            w_i        & \text{if } j=\ell_\sigma < r_\sigma \,, \\[5pt]
           \hb{w_i}   & \text{if } j=r_\sigma < \ell_\sigma \,.
        \end{cases}
    \]
\end{lemma}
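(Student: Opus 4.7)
The plan is to trace the block structure of the chain $(w,\sigma)$ at the precise moment step $j$ is carried out, in each of the two cases. Fix the entry $w_i$ together with its associated positions $\ell, r, \ell_\sigma, r_\sigma$ as in the statement.

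In the first case $j=\ell_\sigma<r_\sigma$, I would proceed as follows. By the maximality in the definition of $\ell_\sigma$, the bar deleted at step $j$ sits at some position $p = \sigma(j) \in \{\ell,\dots,i-1\}$ and is the \emph{only} bar remaining in that range at this moment, since every other bar in $\{\ell,\dots,i-1\}$ has been deleted in an earlier step. Moreover, since $r_\sigma>j$, at least one bar in $\{i,\dots,r-1\}$ is still present. These two facts force the block $B'$ lying immediately to the right of $p$ to occupy a set of consecutive positions $\{p+1,\dots,q\}$ with $i\leq q<r$, while the block $B$ immediately to the left of $p$ contains $w_\ell$.

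I would then read off the minima. All entries $w_{p+1},\dots,w_q$ lie inside the window $w_{\ell+1},\dots,w_{r-1}$, whose minimum is $w_i$ by the very definitions of $\ell$ and $r$; hence $\min(B')=w_i$. On the other side, $\min(B)\leq w_\ell<w_i$. Substituting into \eqref{labelingsetcompo} gives $\varepsilon(B|B')=+1$ and $\max\{\min(B),\min(B')\}=w_i$, so $\lambda_j=w_i$. The second case $j=r_\sigma<\ell_\sigma$ is completely symmetric: the roles of the two blocks swap, making $w_i$ the minimum of the \emph{left} block, while the right block contains $w_r<w_i$, which flips the sign and gives $\lambda_j=\hb{w_i}$.

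The only point requiring care is the boundary bookkeeping that $q<r$ in the first case (and the analogous bound on the left side in the second); this is precisely where the ordering hypothesis $r_\sigma>\ell_\sigma$ (respectively $\ell_\sigma>r_\sigma$) enters. Once that is in place, both the minima and the sign fall out immediately, so I do not expect any further obstacle.
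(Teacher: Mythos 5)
Your argument is correct and follows the same route the paper intends: the paper dismisses this lemma as following ``immediately from the above observations,'' and your write-up simply supplies the details of that observation — identifying the two merging blocks at step $j$, noting that the block containing $w_i$ lies entirely inside the window $w_{\ell+1},\dots,w_{r-1}$ (whose minimum is $w_i$) while the other block contains $w_\ell$ (resp.\ $w_r$), which pins down both the value and the sign of $\lambda_j$. Your identification of where the hypothesis $\ell_\sigma < r_\sigma$ (resp.\ $r_\sigma < \ell_\sigma$) enters is exactly right, so there is nothing to correct.
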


\begin{example}
    We continue the previous example.
    For each entry~$w_i$, we write above in {\red red} the last left and right bar positions. We moreover circle their respective minima.
    \begin{figure}[h]
        \includegraphics{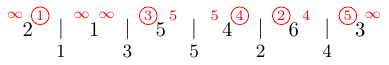}
    \end{figure}

    \noindent Using \Cref{lma:lambdafromlr}, we recompute
    \[
      \lambda(215463,14253) = (\lambda_1, \dots , \lambda_5) = (\hb 2, 6, 5, \hb 4, 3)\,.
    \]
    In particular, every position in $\{1,\dots,5\}$ is circled exactly once in agreement with \Cref{lma: edge label behaviour}, and the signs of~$\lambda$ are determined whether the last (left or right) bar position is smaller.
\end{example}

\subsection{The poset $\boldsymbol{P_{w,Y}}$}
\label{sec: poset}

Throughout this section, fix $w \in \Sym_{n+1}$ and $Y \subseteq \{1,\dots,n\}$ such that
\begin{itemize}
  \item $Y$ does not contain any left-to-right minima of~$w$, while
  \item $Y$ contains all right-to-left minima of~$w$, except the number~$1$.
\end{itemize}
Based on the properties of the edge-labeling $\lambda(w,\sigma)$ provided in the previous section, we define a poset $P = P_{w,Y}$ on the set $\{1,\dots,n\}$ and an injective vertex-labeling~$\Lambda$ of~$P$.
This poset is built so that
\begin{equation*}
    \label{eq: Lin and Ino}
  \Lin(P) = \{\sigma \in \Perm(n) \mid \Ino(w,\sigma) = Y \}\,,
\end{equation*}
where $\Lin(P)$ denotes the set of all linear extensions.
In particular, this implies that $\Perm(n)$ is equal to the disjoint union of the linear extensions of the posets $P_{w,Y}$ for all possible subsets~$Y \subseteq~\{1,\dots,n\}$ with the two given properties.

\medskip

We construct the poset~$P$ together with its injective vertex-labeling~$\Lambda$ by iterating through the numbers $n+1,\dots,2$ in this order.
At each iteration step $w_i \in\{ n+1,\dots,2\}$, we set the label~$\Lambda$ of one vertex.
If~$w_i$ is neither a left-to-right nor right-to-left minimum of~$w$, we add an additional cover relation to~$P$, based on the already added cover relations.

\begin{enumerate}
    \item If $w_i$ is not a left-to-right minimum, we have $\ell \geq 1$ and the set $\{\ell,\dots,i-1\}$ forms a connected subposet of~$P$ with unique maximum. Denote this maximum by~$a$.
    \item If $w_i$ is not a right-to-left minimum, we have $r \leq n$ and the set $\{i,\dots,r-1\}$ forms a connected subposet of~$P$ with unique maximum.
    Denote this maximum by~$b$.\\
    \item If $w_i$ is a left-to-right minimum, it is not a right-to-left minimum. Set $\Lambda(b) = \hb{w_i}$.
    \item If $w_i$ is a right-to-left minimum, it is not a left-to-right minimum. Set $\Lambda(a) = w_i$.
    \item If $w_i$ is neither a left-to-right nor a right-to-left minimum, add the cover relation together with the vertex label of the minimum given by
    \begin{align*}
      a \prec_P b \text{ and } \Lambda(a) =     w_i  \qquad &\text{ if } w_i    \in Y \,\\[5pt]
      b \prec_P a \text{ and } \Lambda(b) = \hb{w_i} \qquad &\text{ if } w_i \notin Y \,.
    \end{align*}
\end{enumerate}

Observe that this iterative process is well-defined since the constructed Hasse diagram is a forest at every step.

\begin{example}
  We continue the previous example with $w= 215463$, and let $Y = \{3,5,6\}$ satisfying the two required properties
  \[
    \{3\} \subseteq Y \subseteq \{3,4,5,6\}\,.
  \]
  The iterative construction of the poset~$P$ is indicated in the following figure row-by-row and in each row left-to-right.
  At each step, there is set one additional vertex-labeling~$\Lambda$, and in the first three iteration steps, there is also a cover relation added.
  This corresponds to the observation that the entries $6,5,4$ in~$w$ are neither left-to-right nor right-to-left minima, while $3,2$ are.
  \begin{center}
    \includegraphics[width=0.7\textwidth]{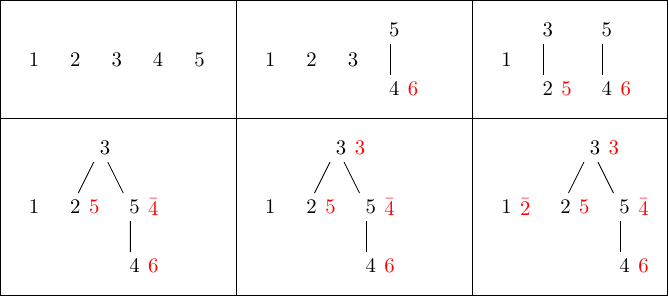}
  \end{center}
\end{example}

\begin{proposition} \label{lma: Lin in P}
    Let $\sigma \in \Perm(n)$ and let $Y = \Ino(w,\sigma)$.
    Then, $\sigma \in \Lin(P_{w,Y}).$
    Moreover,
    \[
      \lambda(w,\sigma) = \Lambda(\sigma) =
      \Big(\Lambda\big(\sigma(1)\big),\dots,\Lambda\big(\sigma(n)\big)\Big)\,,
    \]
    where~$\Lambda$ is the injective vertex-labeling of~$P_{w,Y}$.
\end{proposition}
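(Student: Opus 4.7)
The plan is to prove both assertions of the proposition simultaneously by induction on the iterative construction of $P_{w,Y}$, processing the entries $w_i \in \{2,\dots,n+1\}$ in decreasing order. The inductive invariant, maintained after all entries of value at least $k$ have been processed, has two parts: (a) $\sigma$ respects every cover relation that has been added to the partial poset so far, and (b) for every vertex $v$ whose label $\Lambda(v)$ has already been assigned, $\lambda_{\sigma^{-1}(v)} = \Lambda(v)$.

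The observation that drives the inductive step is that part (a) of the invariant forces the identifications
\[
  \sigma^{-1}(a) = \ell_\sigma \quad \text{and} \quad \sigma^{-1}(b) = r_\sigma
\]
whenever $a$ and $b$ are defined at step $w_i$. Indeed, since $\{\ell,\dots,i-1\}$ is a connected subposet of the partial poset with unique maximum $a$, and $\sigma$ respects this partial order by inductive hypothesis, the element $a$ must occur at the latest rank among all bars in $\{\ell,\dots,i-1\}$, which by definition is $\ell_\sigma$; the argument for $b$ and $r_\sigma$ is analogous.

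Combined with \Cref{lma:lambdafromlr} and the definition $Y = \Ino(w,\sigma)$, this closes the induction. Namely, $w_i \in Y$ if and only if $w_i$ occurs as a positive entry of $\lambda(w,\sigma)$, which by \Cref{lma:lambdafromlr} is equivalent to $\ell_\sigma < r_\sigma$, which by the identification above is equivalent to $\sigma^{-1}(a) < \sigma^{-1}(b)$. This matches exactly the direction of the cover relation added in step (5) of the construction, so $\sigma$ respects it and invariant (a) is maintained. The accompanying vertex label, $\Lambda(a) = w_i$ when $w_i \in Y$ and $\Lambda(b) = \hb{w_i}$ otherwise, equals $\lambda_{\ell_\sigma}$ respectively $\lambda_{r_\sigma}$ by \Cref{lma:lambdafromlr}, preserving invariant (b). The degenerate cases in steps (3) and (4), where $w_i$ is a left-to-right or right-to-left minimum and only one of $\ell_\sigma, r_\sigma$ is finite, are handled by the same argument with a single vertex labeled and no cover relation added.

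The main subtlety is an apparent circularity in the identification $\sigma^{-1}(a) = \ell_\sigma$: this relies on $\sigma$ respecting the partial poset, which is precisely what we are trying to prove. The inductive scheme sidesteps this by using only those cover relations already present at the \emph{start} of step $w_i$, which are covered by the induction hypothesis; the cover relation added during step $w_i$ is verified and absorbed into the invariant only afterwards. Once the iteration terminates at $w_i = 2$, invariant (a) gives $\sigma \in \Lin(P_{w,Y})$ and invariant (b) gives the equality $\lambda(w,\sigma) = \Lambda(\sigma)$ entry-by-entry, since \Cref{lma: edge label behaviour} ensures that each of the $n$ positions of $\lambda(w,\sigma)$ is accounted for by exactly one step of the iteration.
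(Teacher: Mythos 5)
Your proof is correct and follows essentially the same route as the paper: the key step in both is the identification $a=\sigma(\ell_\sigma)$, $b=\sigma(r_\sigma)$, after which the direction of the cover relation and the vertex labels are matched via \Cref{lma:lambdafromlr}. The only difference is that you make explicit the induction on the iterative construction that justifies this identification, which the paper's proof states without further comment.
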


\begin{proof}
  If an iteration step $w_i \in \{n+1,\dots,2\}$ introduces a new cover relation, the two indices~$a$ and~$b$ are given by $a = \sigma(\ell_\sigma)$ and $b = \sigma(r_\sigma)$.
  Then, $a \prec_P b$ if $\ell_\sigma < r_\sigma$ and $b \prec_P a$ if $r_\sigma < \ell_\sigma$.
  In particular, $a \prec_P b$ if and only if $\sigma^{-1}(a) < \sigma^{-1}(b)$.
  We conclude that $\sigma \in \Lin(P)$.
  The second property follows with \Cref{lma:lambdafromlr}.
\end{proof}

\begin{example}
    In our running example, $\sigma = 14253\in \Lin(P)$ is a linear extension of $P$ and we obtain
    \[
      \Lambda(\sigma) = (\hb{2}, 6, 5, \hb{4}, 3)=\lambda(w,\sigma) \,.
    \]
\end{example}

\subsection{Quasisymmetric functions and a proof of the main result}
\label{subsec: proof mainresult}

In order to prove \Cref{thm:mainresult localocal}, we use the following implication of the theory of reverse $(P,\omega)$-partitions from \cite[§7.19]{MR4621625}.

\begin{proposition}
\label{prop:Pomegapartitions}
  Let $P$ be a poset on $\{1,\dots,n\}$ and let $\omega,\omega': P\to \ZZ$ be two injective vertex-labelings having the same relative orders on the cover relations.
  This is, for each cover relation~$a \prec_P~b$ we have
  \[
    \omega(a) < \omega(b) \Leftrightarrow \omega'(a) < \omega'(b)\,.
  \]
  Then, $\omega$ and~$\omega'$ have the same descent set generating function on the set of linear extensions,
  \[
    \sum_{\sigma \in \Lin(P)} \boldsymbol{t}^{\Des(\omega(\sigma))} = \sum_{\sigma \in \Lin(P)} \boldsymbol{t}^{\Des(\omega'(\sigma))}\,,
  \]
  where $\Des\big( \omega(\sigma)\big)$ denotes the descent set of $\big(\omega(\sigma(1)),\dots,\omega(\sigma(n))\big)$ and analogously for $\Des(\omega'(\sigma))$.
\end{proposition}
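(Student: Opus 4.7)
The plan is to deduce the proposition from the classical theory of reverse $(P,\omega)$-partitions, as presented in \cite[§7.19]{MR4621625}, via three short observations. The key object is the quasisymmetric generating function
\[
  K_{(P,\omega)}(x_1, x_2, \ldots) = \sum_{f} \prod_{v \in P} x_{f(v)} \,,
\]
attached to a labeled poset $(P,\omega)$ with $\omega$ injective, where $f$ ranges over all reverse $(P,\omega)$-partitions. Stanley's fundamental lemma expands this function in the basis of fundamental quasisymmetric functions as
\[
  K_{(P,\omega)} = \sum_{\sigma \in \Lin(P)} F_{n,\, \Des(\omega(\sigma))} \,,
\]
so that the multiset $\{\Des(\omega(\sigma)) : \sigma \in \Lin(P)\}$ is encoded in the coefficients of $K_{(P,\omega)}$ in this basis.

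Next, I would observe that the defining conditions of a reverse $(P,\omega)$-partition refer to $\omega$ only through the sign of the comparison $\omega(a) \lessgtr \omega(b)$ on each cover relation $a \prec_P b$, since the strictness of $f$ is imposed precisely along the $\omega$-descending covers. The hypothesis on $\omega$ and $\omega'$ is exactly that these comparisons agree on every cover relation, so the two sets of reverse partitions coincide and $K_{(P,\omega)} = K_{(P,\omega')}$ as formal power series.

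The third step is to invoke the linear independence of the fundamental quasisymmetric functions $\{F_{n,D} : D \subseteq \{1,\dots,n-1\}\}$ inside the ring of quasisymmetric functions. Equating the fundamental expansions of $K_{(P,\omega)}$ and $K_{(P,\omega')}$ and comparing coefficients forces
\[
  \#\{\sigma \in \Lin(P) : \Des(\omega(\sigma)) = D\} = \#\{\sigma \in \Lin(P) : \Des(\omega'(\sigma)) = D\}
\]
for every subset $D \subseteq \{1,\dots,n-1\}$. Multiplying by $\boldsymbol{t}^D$ and summing over $D$ then yields the claimed identity.

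No step here presents a serious obstacle, since each is an off-the-shelf application of well-established machinery. The only care required is one of convention: the \emph{reverse} flavor of the theory is used precisely so that the index set appearing in the fundamental quasisymmetric expansion is $\Des(\omega(\sigma))$ itself rather than its complement, matching the statement of the proposition directly.
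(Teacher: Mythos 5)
Your proposal is correct and follows essentially the same route as the paper: both identify $K_{P,\omega}$ as depending only on the relative order of $\omega$ along cover relations, expand it in the fundamental quasisymmetric basis via \cite[Corollary~7.19.5]{MR4621625}, and compare coefficients (the paper phrases the last step as an equivalence of generating functions rather than explicitly invoking linear independence, but the content is the same).
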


Before proving this proposition, we deduce \Cref{thm:mainresult localocal} as follows.

\begin{lemma}
\label{lma:reversecovers}
    For a cover relation $a \prec_P b$ in $P = P_{w,Y}$, we have
    \[
      a < b \Longleftrightarrow \Lambda(a) > \Lambda(b)\,,
    \]
    where both comparisons are as integers.
\end{lemma}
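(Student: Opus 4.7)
The proof strategy is to track the iterative construction of $P = P_{w,Y}$ together with the labeling $\Lambda$, and to pinpoint the unique iteration step at which the given cover relation is created. Since cover relations are introduced only in case (5) of the construction, this step is some $w_i$, either with $w_i \in Y$ (producing the cover with $\Lambda$ set on the lower endpoint to the positive value $w_i$) or with $w_i \notin Y$ (producing the cover with $\Lambda$ set on the upper endpoint to the negative value $-w_i$). Comparing the construction's positional indices against $i$, the integer inequality $a<b$ of the lemma's LHS is automatically true in the first sub-case and automatically false in the second. In both cases, the lemma reduces to a single inequality on the label of the other endpoint of the cover, i.e.\ the one not labeled at this step.

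The next step is to analyze when the label of that other endpoint, call it $u$, is set. It is set in some step $w_{i_*}\neq w_i$, with $|\Lambda(u)| = w_{i_*}$. If $w_{i_*} < w_i$, then $u$ is labeled later and $|\Lambda(u)|<w_i$, which already gives both required inequalities. Otherwise $w_{i_*}>w_i$, and the sign of $\Lambda(u)$ depends on the case of step $w_{i_*}$: case (3) or the $w_{i_*}\notin Y$ branch of (5) yield a negative label, while case (4) or the $w_{i_*}\in Y$ branch of (5) yield a positive one. In exactly two of these four possibilities the sign already suffices: a negative $\Lambda(u)$ works when $w_i\in Y$ (since then $\Lambda(u)<0<w_i$), and a positive $\Lambda(u)$ works when $w_i\notin Y$ (since then $\Lambda(u)>0>-w_i$). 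The remaining two possibilities are ``wrong-sign'' scenarios that would violate the lemma.

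The main obstacle is ruling out these two bad scenarios via positional arguments. For instance, in the case $w_i \in Y$ with $u$ pre-labeled through case (4) at step $w_{i_*}$: since $w_{i_*}$ is a right-to-left minimum, every $w$-position $j>i_*$ satisfies $w_j>w_{i_*}>w_i$, and hence the current step's $r$-position is strictly less than $i_*$. Since $u$ lies in both $\{\ell_*,\ldots,i_*-1\}$ and $\{i,\ldots,r-1\}$, this forces $r$ to lie in $\{\ell_*+1,\ldots,i_*-1\}$, whose $w$-values all exceed $w_{i_*}>w_i$, contradicting $w_r<w_i$. The other bad scenario for $w_i \in Y$ — pre-labeling via the $w_{i_*}\in Y$ branch of case (5) with $u$ as the lower endpoint — is closed analogously: the parent that $u$ received at the earlier step would be forced into $\{i,\ldots,r-1\}$, contradicting that $u$ is the unique maximum of this set. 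The case $w_i\notin Y$ is symmetric, exchanging $\ell\leftrightarrow r$ and left-to-right $\leftrightarrow$ right-to-left minima. With these contradictions in place, the sign-and-magnitude analysis from the previous paragraph immediately yields the lemma.
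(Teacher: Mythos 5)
Your proof is correct, and it shares the paper's starting point: a cover relation is created only in step (5) of the construction, at which moment one endpoint receives the label $\pm w_i$, so the lemma reduces to a single inequality between $\pm w_i$ and the label of the other endpoint $u$. Where you diverge is in how that remaining inequality is established. The paper simply invokes the structural fact that $u$ is labeled at a \emph{later} iteration step, so that $|\Lambda(u)| = w_{i_*} < w_i$ and hence $\overline{w_i} < \Lambda(u) < w_i$, which settles both directions at once; this fact is part of the (asserted, not proved) well-definedness of the iterative construction. You instead allow for the possibility that $u$ was labeled at an \emph{earlier} step with $w_{i_*} > w_i$, observe that two of the four resulting sign patterns are harmless, and eliminate the two harmful ones by explicit positional arguments (using that a right-to-left minimum $w_{i_*}$ blocks any smaller value from appearing to its right, respectively that a previously acquired parent of $u$ would have to lie inside the interval of which $u$ is supposed to be the unique maximum). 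Your route is longer, but it is more self-contained: it effectively supplies a verification of (the relevant part of) the claim that the paper leaves as an observation, at the cost of a four-way case analysis that the paper's one-line sandwich $\overline{w_i} < \Lambda(u) < w_i$ avoids. Both arguments are valid; if you wanted to match the paper's brevity you could instead prove once and for all that the endpoint not labeled at step $w_i$ is still unlabeled at that step, which is exactly what your contradiction arguments are implicitly doing.
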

\begin{proof}
    Within the construction of $P$, a cover relation $a\prec_P b$ is introduced for $a < b$ if~$a$ and~$b$ are the last left and right bars for some $w_i\in \{n+1, \dots, 2\}$ and $\Lambda(a)=w_i$.
    If a cover relation~$a\prec_P b$ is introduced for $b < a$ instead, the label of $a$ is set to $\Lambda(a)=\hb{w_i}$.
    In both cases,~$\Lambda(b)$ will be determined at a later iteration step.
    Thus, we have $\hb{w_{i}} < \Lambda(b) < w_{i}$,
    so that $\Lambda(a) > \Lambda(b)$ if $a < b$, and $\Lambda(a) < \Lambda(b)$ if $a > b$ as desired.
\end{proof}

\begin{proof}[Proof of \Cref{thm:mainresult localocal}]
    Using \Cref{lma: Lin in P} and \Cref{lma:reversecovers}, it follows with \Cref{prop:Pomegapartitions} that
    \begin{align}
    \label{eq: key eq for proof}
      \sum_{\sigma \in \Lin(P_{w,Y}) } \boldsymbol{t}^{\Des(\sigma)}
      =
      \sum_{\sigma \in \Lin(P_{w,Y}) }
      \boldsymbol{t}^{\Asc(\lambda(w,\sigma))} \,.
    \end{align}
    Since $\Perm(n)$ is the disjoint union over all linear extensions of the posets $P_{w,Y}$, the observation that $\Ino(w,\sigma) = Y$ for $\sigma \in \Lin(P_{w,Y})$ applied to~\eqref{eq: key eq for proof} implies
    \begin{align*}
        \sum_{\sigma \in \Perm(n)}
        \boldsymbol{y}^{\Ino(w,\sigma)} \cdot
        \boldsymbol{t}^{\Asc(\lambda(w,\sigma) )}
        &=
        \sum_{Y}
        \boldsymbol{y}^{Y}
        \sum_{\sigma \in \Lin(P_{w,Y}) }
        \boldsymbol{t}^{\Asc(\lambda(w,\sigma))}              \\
        &=
        \sum_{Y}
        \boldsymbol{y}^{Y}
        \sum_{\sigma \in \Lin(P_{w,Y}) }
        \boldsymbol{t}^{\Des(\sigma)}
        =
        \sum_{\sigma \in \Perm(n)}
        \boldsymbol{y}^{\Ino(w,\sigma)} \cdot
        \boldsymbol{t}^{\Des(\sigma)} \,,
    \end{align*}
  where the two outer sums on the right-hand side range over all subsets $Y \subseteq \{1,\dots,n\}$ that do not contain left-to-right minima of~$w$ while containing all right-to-left minima of~$w$ except the number~$1$.
\end{proof}

We are left to provide a proof of \Cref{prop:Pomegapartitions}.
Let~$P$ be a finite poset on~$\{1,\dots, n\}$ and let~$\omega : P \to \ZZ$ be an injective vertex-labeling of~$P$.
Following \cite[§7.19]{MR4621625}, a \Dfn{reverse $\boldsymbol{(P,\omega)}$-partition} is a map~$\sigma: P \to \NN$ satisfying
\begin{enumerate}
    \item if $a \prec_P b$, then $\sigma(a) \leq \sigma(b)$, and
    \item if $a \prec_P b$ and $\omega (a) > \omega(b)$,
        then $\sigma(a) < \sigma(b)$.
\end{enumerate}
Denote the set of reverse $(P,\omega)$-partitions by $\cA^r(P,\omega)$,
and define the quasisymmetric function
\[
    K_{P,\omega}
    = \sum_{\sigma \in \cA^r(P,\omega) } \boldsymbol{x}^\sigma \in \NN[\![\boldsymbol{x}]\!] = \NN[\![ x_1,x_2,x_3,\dots]\!] \,,
\]
where $\boldsymbol{x}^\sigma = \prod_{a\in P} x_{\sigma(a)}$.
Observe that $K_{P,\omega}$ only depends on the relative order of~$\omega$ on the cover relations of~$P$.
We exploit this property to prove \Cref{prop:Pomegapartitions}.

There is an alternative formula for $K_{P,\omega}$ in terms of linear extensions and the fundamental quasisymmetric functions
\[
    L_{\Des(\sigma)}
    = \sum x_{i_1} \cdots \ x_{i_n} \,,
\]
where the sum ranges over all sequences $1\leq i_1 \leq \cdots \leq i_n \leq n$ with $i_k < i_{k+1}$ if $k\in \Des(\sigma)$.
The quasisymmetric function $K_{P,\omega}$ then has the description
\begin{align}
\label{eq: K in fundamental basis}
    K_{P,\omega} = \sum_{\sigma \in \Lin(P) } L_{\Des \omega(\sigma)} \,,
\end{align}
see~\cite[Corollary~7.19.5]{MR4621625}.

\begin{proof}[Proof of \Cref{prop:Pomegapartitions}]
  Let $\omega,\omega' : P \rightarrow \ZZ$ be two injective vertex-labelings such that for each cover relation $a\prec_P b$, we have
  \[
    \omega(a) < \omega(b) \Longleftrightarrow \omega'(a) < \omega'(b)\,.
  \]
  Since $K_{P,\omega}$ only depends on the relative orders of $\omega$ on the cover relations, we obtain $K_{P,\omega} = K_{P,\omega'}$.
  It thus follows from~\eqref{eq: K in fundamental basis} that
  \[
    \sum_{\sigma \in \Lin(P) } L_{\Des\omega(\sigma)} = \sum_{\sigma \in \Lin(P) } L_{\Des\omega'(\sigma)}\,.
  \]
  Since $L_{\Des(\sigma)}$ only depends on the descent set of~$\sigma$, this implies \Cref{prop:Pomegapartitions}.
  More precisely, for any subsets $A,B \subseteq \Perm(n)$, the equalities of fundamental quasisymmetric function generating functions and of descent set generating functions are equivalent,
  \[
    \sum_{\sigma \in A } L_{\Des(\sigma)} = \sum_{\sigma \in B } L_{\Des(\sigma)}
    \quad \Longleftrightarrow \quad
    \sum_{\sigma \in A } \boldsymbol{t}^{\Des(\sigma)} = \sum_{\sigma \in B }  \boldsymbol{t}^{\Des(\sigma)}\,. \qedhere
  \]
\end{proof}

\printbibliography

\end{document}